\newcommand{\Ker}{\mathsf{Ker}}
\newcommand{\Eq}{\mathsf{Eq}}
\theoremstyle{plain}
\newtheorem{theorem}{Theorem}[section]
\newtheorem{lemma}[theorem]{Lemma}
\newtheorem{proposition}[theorem]{Proposition}
\newtheorem{corollary}[theorem]{Corollary}
\theoremstyle{definition}
\newtheorem{definition}[theorem]{Definition}
\theoremstyle{remark}
\newcommand{\CC}{ \ensuremath{\mathbb {C}} }
\newcommand{\N}{\mathcal N}
\newcommand{\Nk}{ \ensuremath{\mathrm {N}} }
\newcommand{\n}{ \ensuremath{\mathrm {n}} }
\newcommand{\rr}{\rightrightarrows}
\newcommand{\map}[2]{ \ensuremath{ \xymatrix@1@C=15pt{ #1 \ar[r] & #2 } } }
\newcommand{\mono}[2]{ \ensuremath{ \xymatrix@1@C=15pt{ #1 \ar@{ >->}[r] & #2 } } }
\newcommand{\regepi}[2]{ \ensuremath{ \xymatrix@1@C=15pt{ #1 \ar@{>>}[r] & #2 } } }
\newcommand{\sarl}[2]{\ar@<1.6pt>[#2]^-{#1}\ar@<-1.6pt>[#2]}
\newcommand{\sarldash}[2]{\ar@{-->}@<1.6pt>[#2]^-{#1}\ar@{-->}@<-1.6pt>[#2]}
\newcommand{\sarr}[2]{\ar@<1.6pt>[#2]\ar@<-1.6pt>[#2]_-{#1}}
\newcommand{\sarlb}[2]{\ar@/^4pt/@<1.6pt>[#2]^-{#1}\ar@/^4pt/@<-1.6pt>[#2]}
\newcommand{\sarrb}[2]{\ar@/_4pt/@<1.6pt>[#2]\ar@/_4pt/@<-1.6pt>[#2]_-{#1}}
\newcommand{\sarc}[2]{\ar@<0pt>@{}[#2]|-{#1}\ar@<1.6pt>[#2]\ar@<-1.6pt>[#2]}
\newcommand{\sarlh}[3]{\ar@/#3/@<1.6pt>[#2]^-{#1}\ar@/#3/@<-1.6pt>[#2]}
\newcommand{\sarrh}[3]{\ar@/#3/@<1.6pt>[#2]\ar@/#3/@<-1.6pt>[#2]_-{#1}}
\newcommand{\sarch}[3]{\ar@/#3/@<0pt>@{}[#2]|-{#1}\ar@/#3/@<1.6pt>[#2]\ar@<-1.6pt>[#2]}
\begin{document}
\title{ON $2$-STAR-PERMUTABILITY IN REGULAR MULTI-POINTED CATEGORIES}

\author{Marino Gran}
\address{
\noindent  Institut de Recherche en Math\'ematique et Physique, Universit\'e catholique de Louvain, Chemin du Cyclotron 2, 1348 Louvain-la-Neuve, Belgium}

\author{Diana Rodelo}    
\address{  Departamento de Matem\'atica, Faculdade de Ci\^{e}ncias e Tecnologia\\ Universidade do Algarve, Campus de
Gambelas\\  8005--139 Faro, Portugal\\and \\
 CMUC, Universidade de Coimbra\\  3001-454 Coimbra, Portugal \\ drodelo@ualg.pt
 }

\begin{abstract} 
$2$-star-permutable categories were introduced in a joint work with Z. Janelidze and A. Ursini as a common generalisation of regular Mal'tsev categories and of normal subtractive categories. In the present article we first characterise these categories in terms of what we call star-regular pushouts. We then show that the $3 \times 3$ Lemma characterising normal subtractive categories and the Cuboid Lemma characterising regular Mal'tsev categories are special instances of a more general homological lemma for star-exact sequences. We prove that $2$-star-permutability is equivalent to the validity of this lemma for a star-regular category.
\\

\noindent Mathematics Subject Classification 2010: 18C05,
08C05,
18B10, 18E10
\end{abstract}
\keywords{Regular multi-pointed category; star relation; Mal'tsev category; subtractive category; varieties of algebras; homological diagram lemma.}
\maketitle

\section*{Introduction}
The theory of \emph{Mal'tsev categories} in the sense of A. Carboni, J. Lambek and M.C. Pedicchio  \cite{CLP} provides a beautiful example of the way how categorical algebra leads to a structural understanding of algebraic varieties (in the sense of universal algebra). Among regular categories, Mal'tsev categories are characterised by the property of $2$-permutability of equivalence relations: given two equivalence relations $R$ and $S$ on the same object $A$, the two relational composites $RS$ and $SR$ are equal:
$$RS =SR.$$
In the case of a variety of universal algebras this property is actually equivalent to the existence of a ternary term $p(x,y,z)$ satisfying the identities $p(x,y,y)=x$ and $p(x,x,y)=y$ \cite{S}. In the pointed context, that is when the category has a zero object, there is also a suitable notion of $2$-permutability, called ``$2$-permutability at $0$'' \cite{Ursini}. In a variety this property can be expressed by requiring that, whenever for a given element $x$ in an algebra $A$ there is an element $y$ with $xRyS0$ (here $0$ is the unique constant in $A$), then there is also an element $z$ in $A$ with $xSzR0$. The validity of this property is equivalent to the existence of a binary term $s(x,y)$ such that the identities $s(x,0)=x$ and $s(x,x)=0$ hold true \cite{Ursini}.
Among regular categories, the ones where the property of $2$-permutability at $0$ holds true are precisely the \emph{subtractive categories} introduced in \cite{ZJ1}.

The aim of this paper is to look at regular Mal'tsev and at subtractive categories as special instances of the general notion of $2$-star-permutable categories introduced in collaboration with Z. Janelidze and A. Ursini in \cite{GraJanRodUrs11}. This generalisation is achieved by working in the context of a \emph{regular multi-pointed category}, i.e. a regular category equipped with an ideal $\N$ of distinguished morphisms \cite{Ehr64}. When $\N$ is the class of all morphisms, a situation which we refer to as the \emph{total context}, regular multi-pointed categories are just regular categories, and $2$-star-permutable categories are precisely the regular Mal'tsev categories. When $\N$ is the class of all zero morphisms in a pointed category, we call this the \emph{pointed context}, regular multi-pointed categories are regular pointed categories, and $2$-star-permutable categories are the regular subtractive categories.

This paper follows the same line of research as in \cite{GraJanRodUrs11} which was mainly focused on the property of $3$-star-permutability, a generalised notion which captures Goursat categories in the total context and, again, subtractive categories in the pointed context.

 In this work we study two remarkable aspects of the property of $2$-star-permutability. First we provide a characterisation of $2$-star-permutable categories in terms of a special kind of pushouts (Proposition \ref{starregularpushoutscharacterisation}), that we call \emph{star-regular pushouts} (Definition \ref{Definition srp}). Then we examine a homological diagram lemma of star-exact sequences, which can be seen as a generalisation of the $3 \times 3$ Lemma, whose validity is equivalent to $2$-star-permutability.  We call this lemma the Star-Upper Cuboid Lemma (Theorem \ref{thm star-upper cuboid}). The validity of this lemma turns out to give at once a characterisation of regular Mal'tsev categories (extending a result in \cite{GR}) and, in the pointed context, a characterisation of those normal categories which are subtractive (this was first discovered in \cite{ZJan06}).

{\bf Acknowledgement.} The authors are grateful to Zurab Janelidze for some useful conversations on the subject of the paper.
\section{Star-regular categories}\label{Star-regular categories}

\subsection{Regular categories and relations}\label{Regular categories and relations}
A finitely complete category $\CC$ is said to be a \emph{regular} category \cite{EC} when any kernel pair has a coequaliser and, moreover,
regular epimorphisms are stable under pullbacks. In a regular category any morphism $f \colon X \rightarrow Y$ has a factorisation $f=m\cdot p$,  where $p$  is a regular epimorphism and $m$ is a monomorphism. The corresponding (regular epimorphism, monomorphism) factorisation system is then stable under pullbacks.


 A relation $\varrho$ from $X$ to $Y$ is a subobject $\langle \varrho_1,\varrho_2 \rangle\colon R\rightarrowtail X\times Y$. The opposite relation, denoted $\varrho^{\circ}$, is given by the subobject $\langle \varrho_2,\varrho_1 \rangle\colon R\rightarrowtail Y\times X$. We identify a morphism $f:X\rightarrow Y$ with the relation $\langle 1_X,f \rangle\colon X\rightarrowtail X\times Y$ and write $f^{\circ}$ for the opposite relation. Given another relation $\sigma$ from $Y$ to $Z$, the composite relation of $\varrho$ and $\sigma$ is a relation $\sigma \varrho$ from $X$ to $Z$. With this notation, we can write the above relation as $\varrho=\varrho_2 \varrho_1^{\circ}$. The following properties are well known (see \cite{CKP}, for instance); we collect them in a lemma for future references.

\begin{lemma}
\label{pps of ms as relations}
Let $f: X\rightarrow Y$ be any morphism in a regular category $\CC$. Then:
\begin{enumerate}
 \item {$f f^{\circ} f=f$ and $f^{\circ} f f^{\circ}=f^{\circ}$;}
  \item{
  $f f^{\circ}=1_Y$ { if and only if} $f$
  is a regular {epimorphism}.}
\end{enumerate}
\end{lemma}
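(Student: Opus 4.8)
The plan is to reduce both statements to the two basic ``adjunction'' inequalities $1_X \leq f^\circ f$ and $f f^\circ \leq 1_Y$ attached to a morphism viewed as a relation, and then to read off (1) by a formal triangle-identity argument and (2) from the image factorisation. First I would identify the two composites explicitly through the pullback-then-image description of relational composition. For $f^\circ f$ the relevant pullback is that of $f$ along $f$, which is exactly the kernel pair $\Eq(f)$ equipped with its two projections; since the induced arrow into $X \times X$ is already a monomorphism, $f^\circ f$ coincides with the kernel pair relation of $f$, and as $\langle 1_X, 1_X \rangle$ factors through $\Eq(f)$ we obtain $1_X \leq f^\circ f$. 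Dually, $f f^\circ$ is computed as the image of $\langle f, f \rangle = \langle 1_Y, 1_Y \rangle \cdot f$; since this arrow factors through the monomorphism $\langle 1_Y, 1_Y \rangle$, its image is contained in $\langle 1_Y, 1_Y \rangle = 1_Y$, whence $f f^\circ \leq 1_Y$.

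For (1), I would use only that composition of relations is associative and monotone with respect to inclusion (both recalled from \cite{CKP}). Composing $f f^\circ \leq 1_Y$ with $f$ on the right gives $f f^\circ f \leq 1_Y f = f$, while composing $1_X \leq f^\circ f$ with $f$ on the left gives $f = f 1_X \leq f f^\circ f$; hence $f f^\circ f = f$. The second identity is then automatic by taking opposites: applying $(-)^\circ$ to $f f^\circ f = f$ and using $(\sigma \varrho)^\circ = \varrho^\circ \sigma^\circ$ together with $(f^\circ)^\circ = f$ turns the left-hand side into $f^\circ f f^\circ$, so that $f^\circ f f^\circ = f^\circ$.

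For (2), the inequality $f f^\circ \leq 1_Y$ is already available, so only the equality case requires work. I would factor $f = m \cdot p$ with $p \colon X \to I$ a regular epimorphism and $m \colon I \rightarrowtail Y$ a monomorphism. The crucial point is that $p p^\circ = 1_I$: the arrow $\langle p, p \rangle = \langle 1_I, 1_I \rangle \cdot p$ presents a regular epimorphism followed by a monomorphism, which is therefore its own (regular epimorphism, monomorphism) factorisation, so its image is exactly the diagonal $\langle 1_I, 1_I \rangle$. Associativity then yields $f f^\circ = m (p p^\circ) m^\circ = m m^\circ = \langle m, m \rangle$, a subobject of $Y \times Y$ that equals $1_Y = \langle 1_Y, 1_Y \rangle$ precisely when $m$ is an isomorphism, that is, precisely when $f$ is a regular epimorphism.

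I expect the genuine content, and hence the main obstacle, to lie in the first paragraph and in the identity $p p^\circ = 1_I$: these are the points where one must pass carefully between the pullback-then-image construction of a relational composite and its concrete description as the kernel pair, the image, or the (regular epimorphism, monomorphism) factorisation, and they are where the regularity of $\CC$ genuinely intervenes. Everything else is formal manipulation in the ordered calculus of relations.
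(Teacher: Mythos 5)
Your proof is correct and complete: the reduction of (1) to the inequalities $1_X\leq f^{\circ}f$ and $ff^{\circ}\leq 1_Y$ plus associativity, and the computation $ff^{\circ}=mm^{\circ}$ via the (regular epimorphism, monomorphism) factorisation for (2), are exactly the standard arguments. The paper itself offers no proof --- it records the lemma as well known and cites \cite{CKP} --- so there is nothing to contrast with; your write-up simply supplies the omitted details, and does so accurately.
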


A kernel pair of a morphism $f: X\rightarrow Y$, denoted by $$(\pi_1,\pi_2)\colon \Eq(f) \rr X,$$ is called an \emph{effective equivalence relation}; we write it either as $\Eq(f)=f^{\circ}f$, or as $\Eq(f)=\pi_2 \pi_1^{\circ}$, as mentioned above. When $f$ is a regular epimorphism, then $f$ is the coequaliser of
$\pi_1$ and $\pi_2$ and the diagram
$$
\xymatrix{
  \Eq(f) \ar@<3pt>[r]^-{\pi_1} \ar@<-3pt>[r]_-{\pi_2} & X \ar@{>>}[r]^-f & Y }
$$
is called an \emph{exact fork}. In a regular category any effective equivalence relation is the kernel pair of a regular epimorphism.

\subsection{Star relations}
We now recall some notions introduced in \cite{GJU}, which are useful to develop a unified treatment of pointed and non-pointed categorical algebra.
Let $\CC$ denote a category with finite limits, and $\N$ a distinguished class of morphisms that forms an \emph{ideal}, i.e.~for any composable pair of morphisms $g,f$, if either $g$ or $f$ belongs to $\N$, then the composite $g\cdot f$ belongs to $\N$. An \emph{$\N$-kernel} of a morphism $f:X\rightarrow Y$ is defined as a morphism $\n_{f}: \Nk_f \rightarrow X$ such that $f\cdot \n_f \in\N$ and $\n_f$ is universal with this property (note that such $\n_f$ is automatically a monomorphism). A pair of parallel morphisms, denoted by $\sigma=(\sigma_1,\sigma_2):S\rr X$ with $\sigma_1\in \N$, is called a \emph{star}; it is called a monic star, or a \emph{star relation}, when the pair $(\sigma_1,\sigma_2)$ is jointly monomorphic. 

Given a relation $\varrho=(\varrho_1,\varrho_2):R\rr X$ on an object $X$, we denote by $\varrho^\ast:R^\ast\rr X$ the biggest subrelation of $\varrho$ which is a (monic) star. When $\CC$ has $\N$-kernels, it can be constructed by setting $\varrho^\ast=(\,\varrho_1\cdot \n_{\rho_1},\varrho_2\cdot \n_{\rho_1}\,)$, where $\n_{\rho_1}$ is the $\N$-kernel of $\varrho_1$. In particular, if we denote the discrete equivalence relation on an object $X$ by $\Delta_X=(1_X,1_X):X\rr X$, then $\Delta_X^\ast=(\, \n_{1_X},\n_{1_X} \,)$, where $\n_{1_X}$ is the $\N$-kernel of $1_X$.

The \emph{star-kernel} of a morphism $f:X\rightarrow Y$ is a universal star $\sigma=(\sigma_1,\sigma_2):S\rr X$ with the property $f\cdot \sigma_1=f\cdot \sigma_2$; it is easy to see that the star-kernel of $f$ coincides with $\Eq(f)^\ast \rr X$ whenever $\N$-kernels exist.

A category $\CC$ equipped with an ideal $\mathcal{N}$ of morphisms is called a \emph{multi-pointed} category \cite{GJU}. If, moreover, every morphism admits an $\N$-kernel, then $\CC$ will be called a \textit{multi-pointed category with kernels}.

\begin{definition}\cite{GJU}\label{star-regular cat}
A regular multi-pointed category $\CC$ with kernels is called a \emph{star-regular category} when every regular epimorphism in $\CC$ is a coequaliser of a star.
\end{definition}

In the total context stars are pairs of parallel morphisms, $\N$-kernels are isomorphisms, star-kernels are kernel pairs and a star-regular category is precisely a regular category. In the pointed context, the first morphism $\sigma_1$ in a star $\sigma=(\sigma_1,\sigma_2):S\rr X$ is the unique null morphism $S\rightarrow X$ and hence a star $\sigma$ can be identified with a morphism (its second component $\sigma_2$). Then, $\N$-kernels and star-kernels become the usual kernels, and a star-regular category is the same as a normal category \cite{ZJan10}, i.e.~a pointed regular category in which any regular epimorphism is a normal epimorphism.

\subsection{Calculus of star relations}\label{Calculus of star relations}
The calculus of star relations \cite{GraJanRodUrs11} can be seen as an extension of the usual calculus of relations (in a regular category) to the regular multi-pointed context.
 First of all note that for any relation $\varrho:R\rr X$ we have
$$\varrho^\ast=\varrho \Delta_X^\ast.$$

Inspired by this formula, for any relation $\varrho$ from $X$ to an object $Y$, we define
$$\varrho^\ast = \varrho \Delta_X^\ast\;\;\;\; \mathrm{and}\;\;\;\; ^\ast\!\varrho=\Delta_Y^\ast \varrho.$$

Note that associativity of composition yields
$$^\ast(\varrho^\ast)=(^\ast\!\varrho)^\ast$$
and so we can write $^\ast\!\varrho^\ast$ for the above.

For any relation $\sigma$ (from some object $Y$ to $Z$), the associativity of composition also gives
$$(\sigma^\ast) \varrho = \sigma (^\ast\!\varrho),$$
and $${(\sigma  \varrho)}^{\ast}= \sigma  {\varrho}^{\ast}.$$

It is easy to verify that for any morphism $f:X\rightarrow Y$ we have
$$f^\ast=\;^\ast f^\ast \;\;\;\;\mathrm{and}\;\;\;\; ^\ast f^{\circ}=\;^\ast {f^{\circ}}^\ast.$$

\section{$2\,$-star-permutability and star-regular pushouts}
\label{$2$-(star-)permutability}
\noindent Recall that a finitely complete category $\CC$ is called a \emph{Mal'tsev category} when any reflexive relation in $\CC$ is an equivalence relation {\cite{CLP, CKP}.
We recall the following well known characterisation of the regular categories which are Mal'tsev categories:

\begin{proposition}
\label{Definition: Mal'tsev category}
A regular category $\CC$ is a Mal'tsev category if and only if the composition of effective equivalence relations in $\CC$ is commutative:
$$\Eq(f) \Eq(g)=\Eq(g) \Eq(f)$$
for any pair of regular epimorphisms $f$ and $g$ in $\CC$ with the same domain.
\end{proposition}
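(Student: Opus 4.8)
The plan is to prove the two implications separately, working throughout with the calculus of relations in the regular category $\CC$ and relying on Lemma \ref{pps of ms as relations}. Recall that $\CC$ being Mal'tsev means that every reflexive relation on an object is already an equivalence relation, and that a reflexive relation $\varrho$ is an equivalence relation as soon as it is \emph{difunctional}, i.e. $\varrho\varrho^\circ\varrho=\varrho$: given reflexivity, difunctionality yields symmetry via $\varrho^\circ=\Delta_X\varrho^\circ\Delta_X\subseteq\varrho\varrho^\circ\varrho=\varrho$ and then transitivity via $\varrho\varrho=\varrho\Delta_X\varrho\subseteq\varrho\varrho^\circ\varrho=\varrho$. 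I expect the forward implication to be routine and the converse to carry the key idea of the proof.

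For the implication from Mal'tsev to permutability, I would take two effective equivalence relations $R=\Eq(f)$ and $S=\Eq(g)$ on the common domain $X$. Since $R$ and $S$ each contain the diagonal, so does the composite $RS$, which is therefore a reflexive relation on $X$; by the Mal'tsev hypothesis $RS$ is then an equivalence relation, in particular symmetric. Using that $R$ and $S$ are themselves symmetric, I would conclude
$$RS=(RS)^\circ=S^\circ R^\circ=SR,$$
which is the desired commutativity.

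For the converse, I would start from an arbitrary reflexive relation $\varrho=(\varrho_1,\varrho_2)\colon R\rr X$ and aim to show it is difunctional. The point is that reflexivity provides a common section of the two projections $\varrho_1,\varrho_2\colon R\to X$, so both are split epimorphisms, hence regular epimorphisms with the same domain $R$; the permutability hypothesis therefore applies to their kernel pairs, giving $\varrho_1^\circ\varrho_1\,\varrho_2^\circ\varrho_2=\varrho_2^\circ\varrho_2\,\varrho_1^\circ\varrho_1$. Composing this identity on the left with $\varrho_2$ and on the right with $\varrho_1^\circ$, the left-hand side becomes exactly $\varrho\varrho^\circ\varrho$ (writing $\varrho=\varrho_2\varrho_1^\circ$ and $\varrho^\circ=\varrho_1\varrho_2^\circ$), while the right-hand side collapses to $\varrho$ after the two applications of Lemma \ref{pps of ms as relations}, namely $\varrho_2\varrho_2^\circ\varrho_2=\varrho_2$ and $\varrho_1^\circ\varrho_1\varrho_1^\circ=\varrho_1^\circ$. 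This yields $\varrho\varrho^\circ\varrho=\varrho$, and difunctionality of the reflexive relation $\varrho$ makes it an equivalence relation, so $\CC$ is Mal'tsev.

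The main obstacle is the converse direction, and specifically the realisation that the permutability hypothesis should be applied to the kernel pairs of the two projections of a reflexive relation: once these projections are recognised as regular epimorphisms with a common domain, the whole argument reduces to the relational bookkeeping above. A minor point to check carefully is that reflexivity is genuinely used twice, both to guarantee that the projections are split epimorphisms and to pass from difunctionality to symmetry and transitivity.
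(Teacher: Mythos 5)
Your proof is correct. Note that the paper itself gives no proof of this proposition: it is recalled as a well-known fact (from \cite{CLP,CKP}), so there is nothing to compare against directly. Your argument --- the easy direction via symmetry of the reflexive composite $RS$, and the converse via applying permutability to the kernel pairs of the two (split epimorphic) projections of a reflexive relation to deduce difunctionality $\varrho\varrho^{\circ}\varrho=\varrho$, hence symmetry and transitivity --- is precisely the standard argument from the cited literature, and every step (in particular the two uses of Lemma \ref{pps of ms as relations}(1) collapsing $\varrho_2\varrho_2^{\circ}\varrho_2\,\varrho_1^{\circ}\varrho_1\varrho_1^{\circ}$ to $\varrho_2\varrho_1^{\circ}=\varrho$) checks out.
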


There are many known characterisations of regular Mal'tsev categories (see Section $2.5$ in \cite{BB}, for instance, and references therein). The one that will play a central role in the present work is expressed in terms of commutative diagrams of the form
\begin{equation}
\label{regular po}
\vcenter{\xymatrix@=20pt{
  C \ar@{>>}[r]^-{c} \ar@<-2pt>[d]_-g & A  \ar@<-2pt>[d]_-f \\
  D \ar@{>>}[r]_-d \ar@<-2pt>[u]_-t & B, \ar@<-2pt>[u]_-s }}
\end{equation}
where $f$ and $g$ are split epimorphisms ($f\cdot s=1_B$, $g\cdot t=1_D$), $f\cdot c= d\cdot g$, $s\cdot d= c\cdot t$, and $c$ and $d$ are
regular epimorphisms. A diagram of type (\ref{regular po}) is always a pushout; it is called a \emph{regular pushout} \cite{B} (alternatively, a \emph{double extension} \cite{Jan, GRossi})
when, moreover, the canonical morphism $\langle g,c \rangle\colon C\twoheadrightarrow D\times_B A$ to the pullback $D\times_B A$ of $d$ and $f$ is a regular epimorphism.
Among regular categories, Mal'tsev categories can be characterized as those ones where any square (\ref{regular po}) is a regular pushout: this easily follows from the results in \cite{B}, and a simple proof of this fact is given in \cite{Cuboid}.

Observe that a commutative diagram of type (\ref{regular po}) is a regular pushout if and only if $c g^{\circ}= f^{\circ}  d$ or, equivalently, $g c^{\circ} = d^{\circ} f$.
This suggests to introduce the following notion:
\begin{definition}\label{Definition srp}
A commutative diagram $(\ref{regular po})$ is a \emph{star-regular pushout} if it satisfies the identity ${cg^{\circ}}^\ast = f^{\circ} d^\ast$ (or, equivalently, $g{c^{\circ}}^* = d ^{\circ}f^*$).
\end{definition}

Diagrammatically, the property of being a star-regular pushout can be expressed as follows. Consider the commutative diagram
\begin{equation}
\label{star-regular po explaination}
\vcenter{\xymatrix@C=10pt@R=5pt{
    \Nk_g \ar@{>>}[dr] \ar@{ >->}[ddd]_-{\n_g}  & & & & &  \\
    & \Nk_a \ar@{ >->}[ddd]^(.4){\n_a} \ar@{.>}[r] & \Nk_x \ar@{ >->}[dddd]^(.3){\n_x} \\ \\
    C \ar@{>>}[dr]_-p \ar@<-2pt>[ddddd]_-g \ar@{>>}[rrrrr]^(.7)c & & & & & A \ar@<-2pt>[ddddd]_-f \\
    & M \ar[ddddl]_(.3)a \ar[rrrru]^(.3)b \ar@{ >->}[dr]_-{m} \\
    & & D\times_B A \ar@{}[dddrrr]|(.25){\lrcorner} \ar@<-2pt>[dddll]_-x \ar@{>>}[rrruu]^-y \\ \\ \\
    D \ar@<-2pt>[uuuuu]\ar@{>>}[rrrrr]_-d & & & & & B, \ar@<-2pt>[uuuuu]}}
\end{equation}
where $(D\times_B A, x,y)$ is the pullback of $(f,d)$, $m\cdot p$ is the (regular epimorphism, monomorphism) factorisation of the induced morphism $\langle g, c \rangle \colon C \rightarrow D \times_B A$.
Then the identity $c g^{\circ}=b a^{\circ}$ allows one to identify $c {g^{\circ}}^\ast$ with the relation $(a \cdot \n_a, b\cdot \n_a)$, while $f^{\circ}d=y x^{\circ}$ says that $f^{\circ} d^\ast$ can be identified with the relation $(x \cdot \n_x, y \cdot \n_x)$. Accordingly, diagram (\ref{regular po}) is a star-regular pushout precisely when the dotted arrow from $\Nk_a$ to $\Nk_x$ is an isomorphism. Notice that in the total context the $\N$-kernels are isomorphisms, so that $m$ is an isomorphism if and only if (\ref{regular po}) is a regular pushout, as expected.

The ``star-version'' of the notion of Mal'tsev category can be defined as follows:
\begin{definition}\cite{GraJanRodUrs11} \label{2-star-permutable}
A regular multi-pointed category with kernels $\mathbb{C}$ is said to be a \emph{$2$-star-permutable category} if
$$\Eq(f) \Eq(g)^\ast=\Eq(g) \Eq(f)^\ast$$
for any pair of regular epimorphisms $f$ and $g$ in $\CC$ with the same domain.
\end{definition}
One can check that the equality $\Eq(f) \Eq(g)^\ast=\Eq(g) \Eq(f)^\ast$ in the definition above can be actually replaced by $\Eq(f) \Eq(g)^\ast \le\Eq(g) \Eq(f)^\ast$.

In the total context the property of $2$-star-permutability characterises the regular categories which are Mal'tsev. In the pointed context this same property characterises the regular categories which are subtractive \cite{ZJ1} (this follows from the characterisation of subtractivity given in Theorem 6.9 in \cite{ZJan06}).

The next result gives a useful characterisation of $2$-star-permutable categories. Given
a commutative diagram of type (\ref{regular po}), we write $g\langle \Eq(c) \rangle$ and $g\langle \Eq(c)^\ast \rangle$ for the direct images of the relations $\Eq(c)$ and $\Eq(c)^\ast$ along the split epimorphism $g$.
The vertical split epimorphisms are such that both the equalities $g\langle \Eq(c) \rangle=\Eq(d)$ and $g\langle \Eq(c)^\ast \rangle=\Eq(d)^\ast$ hold true in $\mathbb C$.

\begin{proposition}
\label{starregularpushoutscharacterisation}
For a regular multi-pointed category with kernels $\CC$ the following statements are equivalent:
\begin{enumerate}
    \item[(a)] $\CC$ is a $2$-star-permutable category;
    \item[(b)] any commutative diagram of the form \emph{(\ref{regular po})} is a star-regular pushout.
\end{enumerate}
\end{proposition}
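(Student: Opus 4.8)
The plan is to prove the equivalence (a)$\Leftrightarrow$(b) by reducing the definition of $2$-star-permutability to the star-regular pushout identity, exploiting the calculus of star relations recalled in Section \ref{Calculus of star relations} together with the well-known relational identities of Lemma \ref{pps of ms as relations}. The central observation I would start from is that, given a commutative diagram of the form (\ref{regular po}), the four morphisms $c,d,f,g$ give rise to effective equivalence relations, and the condition defining a star-regular pushout, namely ${cg^{\circ}}^\ast = f^{\circ}d^\ast$, can be rephrased entirely in terms of composites of these kernel pairs and their star-parts. I expect both implications to follow from a single master computation relating $\Eq(f)\Eq(g)^\ast$ to ${cg^{\circ}}^\ast$ and $\Eq(g)\Eq(f)^\ast$ to $f^{\circ}d^\ast$.

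First I would record the relational translations. Using $\Eq(g)=g^{\circ}g$ and $\Eq(f)=f^{\circ}f$, and recalling from Section \ref{Calculus of star relations} that ${(\sigma\varrho)}^\ast=\sigma\varrho^\ast$, I would compute $\Eq(f)\Eq(g)^\ast = f^{\circ}f g^{\circ}{g}^\ast$ and simplify using the hypothesis $f\cdot c = d\cdot g$ and the fact that $c,d$ are regular epimorphisms (so $cc^{\circ}=1_A$, $dd^{\circ}=1_B$ by Lemma \ref{pps of ms as relations}(2)). The key geometric input is the identity $cg^{\circ}=ba^{\circ}$ and its star-companion: the diagram (\ref{star-regular po explaination}) shows that $c{g^{\circ}}^\ast$ is carried by $\Nk_a$ while $f^{\circ}d^\ast$ is carried by $\Nk_x$, and the dotted comparison arrow $\Nk_a\dashrightarrow\Nk_x$ is always defined and monic because $M$ maps into the pullback $D\times_B A$. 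Thus one always has the inequality ${cg^{\circ}}^\ast \le f^{\circ}d^\ast$, and the pushout is star-regular exactly when this comparison is an equality (equivalently, an isomorphism). I would then show that, after composing appropriately on the left and right, $\Eq(f)\Eq(g)^\ast \le \Eq(g)\Eq(f)^\ast$ is equivalent to ${cg^{\circ}}^\ast \le f^{\circ}d^\ast$, invoking the remark after Definition \ref{2-star-permutable} that the inequality version of $2$-star-permutability already suffices.

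For the implication (a)$\Rightarrow$(b), I would take an arbitrary square (\ref{regular po}) and apply $2$-star-permutability to the pair $(f,g)$; transporting the resulting relational inequality through the identities above yields exactly ${cg^{\circ}}^\ast = f^{\circ}d^\ast$, the defining condition of Definition \ref{Definition srp}. For the converse (b)$\Rightarrow$(a), the subtlety is that the definition of $2$-star-permutability quantifies over \emph{all} pairs of regular epimorphisms $f,g$ with common domain, whereas (b) concerns diagrams of the special shape (\ref{regular po}) with \emph{split} vertical maps. The standard device here is to build, from an arbitrary pair $(f,g)$ of regular epimorphisms out of a common object, a canonical square of the form (\ref{regular po}): one passes to the induced regular epimorphisms between the kernel pairs, using that $g$ restricts to a split epimorphism $\Eq(c)\to\Eq(d)$ with $g\langle\Eq(c)\rangle=\Eq(d)$ and $g\langle\Eq(c)^\ast\rangle=\Eq(d)^\ast$, exactly as flagged in the paragraph preceding Proposition \ref{starregularpushoutscharacterisation}. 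Applying hypothesis (b) to this derived square then produces the required star-permutability identity.

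The main obstacle, and the step I would scrutinise most carefully, is the converse direction: verifying that the direct-image equalities $g\langle\Eq(c)\rangle=\Eq(d)$ and $g\langle\Eq(c)^\ast\rangle=\Eq(d)^\ast$ genuinely let one recover the general star-permutability statement from the special split case, and that the star-part operation $(-)^\ast$ commutes with the direct image along $g$ in the way needed. This requires care because $(-)^\ast$ is defined via $\N$-kernels, which need not be preserved by arbitrary regular epimorphisms; the regularity of $\CC$ and the stability of the (regular epi, mono) factorisation under pullback are what make the argument go through. Everything else is a formal manipulation within the calculus of star relations, so I would present the master computation once and then read off both implications from it, keeping the direct-image lemma as the single nontrivial ingredient.
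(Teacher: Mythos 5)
Your overall strategy --- translating everything into the calculus of relations, using the remark that the inequality $\Eq(f)\Eq(g)^\ast\le\Eq(g)\Eq(f)^\ast$ suffices, and constructing an auxiliary square of type (\ref{regular po}) for the converse --- is the same as the paper's. But there are two concrete gaps.

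First, in (a)$\Rightarrow$(b) you propose to ``apply $2$-star-permutability to the pair $(f,g)$''. In diagram (\ref{regular po}) the morphisms $f$ and $g$ do \emph{not} have a common domain ($f\colon A\to B$, $g\colon C\to D$), so the hypothesis cannot be applied to them. The correct pair is $(c,g)$, both regular epimorphisms out of $C$, giving $\Eq(c)\Eq(g)^\ast=\Eq(g)\Eq(c)^\ast$; this is combined with $cc^{\circ}=1_A$, with $c^{\circ}f^{\circ}=g^{\circ}d^{\circ}$, and with the direct-image identity $d^{\circ}d^\ast=gc^{\circ}c^\ast g^{\circ}$ (i.e.\ $g\langle\Eq(c)^\ast\rangle=\Eq(d)^\ast$, which is where the paragraph preceding the proposition is actually used) to get $f^{\circ}d^\ast\le cg^{\circ}$, whence $f^{\circ}d^\ast\le {cg^{\circ}}^\ast$ by maximality of the star part. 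There is no single ``master computation'' identifying $\Eq(f)\Eq(g)^\ast\le\Eq(g)\Eq(f)^\ast$ with ${cg^{\circ}}^\ast\le f^{\circ}d^\ast$ for the same square: the two implications use different squares and different pairs of parallel regular epimorphisms.

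Second, in (b)$\Rightarrow$(a) your description of the auxiliary square is off. Starting from arbitrary regular epimorphisms $f\colon X\twoheadrightarrow Y$ and $g\colon X\twoheadrightarrow Z$, the square to which (b) must be applied has $\Eq(f)$ and its regular image $g\langle\Eq(f)\rangle$ on top, $X$ and $Z$ on the bottom, the \emph{first projections} $\pi_1$ and $\rho_1$ as the vertical split epimorphisms (split because the relations are reflexive), and $c\colon\Eq(f)\twoheadrightarrow g\langle\Eq(f)\rangle$, $g$ as the horizontal regular epimorphisms. Hypothesis (b) then yields $\rho_1^{\circ}g^\ast=c\,{\pi_1^{\circ}}^\ast$, and one still needs a further chain of inequalities --- using $\Delta_{\Eq(f)}\le\pi_2^{\circ}\pi_2$ and the fact that $\pi_2\langle\Eq(c)\rangle=\Eq(g)$ (the split epimorphism induced by $\pi_2,\rho_2$) --- to reach $\Eq(f)\Eq(g)^\ast\le\Eq(g)\Eq(f)^\ast$. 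Your proposal instead invokes the equalities $g\langle\Eq(c)\rangle=\Eq(d)$, $g\langle\Eq(c)^\ast\rangle=\Eq(d)^\ast$, which presuppose a square that does not yet exist in this direction, and it omits the final derivation entirely; as written, the converse implication is asserted rather than proved.
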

\begin{proof}
\noindent (a) $\Rightarrow$ (b)
Given a pushout (\ref{regular po}) we have
$$\begin{tabular}{lll}
    $f^{\circ} d^\ast$ & $= cc^{\circ}f^{\circ}d^\ast$ &  (Lemma~\ref{pps of ms as relations}(2)) \\
    & $= cg^{\circ}d^{\circ}d^\ast$ &  ($f\cdot c=d \cdot g$) \\
    & $= cg^{\circ} g c^{\circ}c^\ast g^{\circ}$ & ($\Eq(d)^\ast=g\langle \Eq(c)^\ast \rangle$) \\
    & $= cc^{\circ}cg^{\circ}g^\ast g^{\circ}$ & ($\Eq(g) \Eq(c)^\ast=\Eq(c) \Eq(g)^\ast$ by Definition~\ref{2-star-permutable}) \\
     & $\le  cc^{\circ}cg^{\circ}g g^{\circ}$ & ($g^* \le g$) \\
    & $= cg^{\circ} $. &  (Lemma~\ref{pps of ms as relations}(1)) \\
\end{tabular}$$
Since ${cg^{\circ}}^*$ is the largest star contained in $cg^{\circ}$, it follows that $f^{\circ} d^\ast \le {cg^{\circ}}^*$. The inclusion ${cg^{\circ}}^* \le f^{\circ} d^\ast$ always holds, so that ${cg^{\circ}}^* = f^{\circ} d^\ast$.

\noindent (b) $\Rightarrow$ (a)
 Let us consider regular epimorphisms $f\colon X \twoheadrightarrow Y$ and $g\colon X\twoheadrightarrow Z$. We want to prove that $\Eq(f) \Eq(g)^\ast = \Eq(g) \Eq(f)^\ast$. For this we build the following diagram
$$
\xymatrix@=20pt{
    \Eq(f) \ar@<-3pt>[d]_-{\pi_1} \ar@<3pt>[d]^-{\pi_2} \ar@{>>}[r]^-{c} & g\langle \Eq(f) \rangle \ar@<-3pt>[d]_-{\rho_1} \ar@<3pt>[d]^-{\rho_2} \\
    X \ar@{>>}[d]_-f \ar@{>>}[r]_-g & Z\\ Y }
$$
that represents the regular image of $\Eq(f)$ along $g$. The relation $g \langle \Eq(f) \rangle=(\rho_1,\rho_2)$ is reflexive and, consequently, $\rho_1$ is a split epimorphism. By assumption, we then know that the equality
$$(A) \qquad  \rho_1^{\circ} g^* = c {\pi_1^{\circ}}^*$$
holds true. This implies that
$$\begin{tabular}{lll}
$\Eq(f) \Eq(g)^*$   & =   $\pi_2 \pi_1^{\circ} g^{\circ} g^*$ &  \\
  & $=\pi_2 c^{\circ} \rho_1^{\circ} g^*$   & ($g \cdot \pi_1 =\rho_1 \cdot c$)\\
    & $= \pi_2 c^{\circ} c {\pi_1^{\circ}} ^*$  &  (A) \\
    & $\le  \pi_2 c^{\circ} c \pi_2^{\circ} \pi_2 {\pi_1^{\circ}} ^*$  & ($ \Delta_{\Eq(f)} \le \pi_2^{\circ} \pi_2$) \\
    & $=  \Eq(g) \pi_2 {\pi_1^{\circ}} ^*$ & ($\pi_2\langle \Eq(c)\rangle =\Eq(g
    ))$ \\
    & $=  \Eq(g) \Eq(f)^*,$ &  \\
\end{tabular}$$
where the equality $\pi_2\langle \Eq(c) \rangle =\Eq(g)$ follows from the fact that the split epimorphisms $\pi_2$ and $\rho_2$ induce a split epimorphism from $\Eq(c)$ to $\Eq(g)$.
\end{proof}

In the total context, Proposition~\ref{starregularpushoutscharacterisation} gives the characterisation of regular Mal'tsev categories through regular pushouts (see~\cite{B} and Proposition 3.4 of~\cite{Cuboid}), as expected. In the pointed context, condition (b) of Proposition~\ref{starregularpushoutscharacterisation} translates into the pointed version of the \emph{right saturation} property \cite{{GraJanRodUrs11}} for any commutative diagram of type (\ref{regular po}): the induced morphism $\bar{c}:\Ker(g) \rightarrow \Ker(f)$, from the kernel of $g$ to the kernel of $f$ is also a regular epimorphism. This can be seen by looking at diagram (\ref{star-regular po explaination}), where the $\N$-kernels now represent actual kernels, so that $\Ker(a)=\Ker(x)=\Ker(f)$.


\subsection{The star of a pullback relation}\label{The star of a pullback relation}
Consider the pullback relation $\pi=(\pi_1,\pi_2)$ of a pair $(g,\delta)$ of morphisms as in the diagram
$$
\xymatrix@C=6pt@R=8pt{
    & W\times_D C \ar[dd]_-{\pi_1} \ar[rr]^-{\pi_2} \ar@{}[ddrr]|(.3){\lrcorner} & & C \ar[dd]^-g\\ \\
    & W \ar[rr]_-{\delta} & & D.}
$$
The \emph{star of the pullback relation} $\pi$ is defined as $\pi^\ast =\pi \Delta_W^\ast$. It can be described as the universal relation $\nu=(\nu_1,\nu_2)$ from $W$ to $C$ such that $\nu_1\in\N$ and $\delta\cdot \nu_1=g\cdot \nu_2$ as in the diagram
$$
\xymatrix@C=6pt@R=8pt{
(W\times_D C)^\ast \ar@(d,l)[dddr]_-{\nu_1} \ar@(r,u)[rrrd]^-{\nu_2} \ar[dr]^(.6){\n_{\pi_1}}\\
    & W\times_D C \ar[dd]_-{\pi_1} \ar[rr]^-{\pi_2} \ar@{}[ddrr]|(.3){\lrcorner} & & C \ar[dd]^-g\\ \\
    & W \ar[rr]_-{\delta} & & D,}
$$
where $\n_{\pi_1}$ is the $\N$-kernel of $\pi_1$, $\nu_1=\pi_1 \cdot \n_{\pi_1}$ and $\nu_2=\pi_2 \cdot \n_{\pi_1}$.

By using the composition of relations one has the equalities $\pi=\pi_2 \pi_1^{\circ}=g^{\circ}\delta$, so that
$$\pi^*={\pi_2 \pi_1^{\circ}}^\ast={g^{\circ}\delta}^\ast.$$

In the total context, the star of a pullback relation is precisely that pullback relation. In the pointed context, the star of the pullback (relation) of $(g,\delta)$ is given by $\pi^\ast=(0,\ker(g))$.

A morphism $f\colon X\rightarrow Y$ in a multi-pointed category with kernels is said to be \emph{saturating} \cite{GraJanRodUrs11} when the induced dotted morphism from the $\N$-kernel of $1_X$ to the $\N$-kernel of $1_Y$ making the diagram
$$
\xymatrix@=20pt{ N_{1_X} \ar@{ >->}[d]_-{\n_{1_X}} \ar@{.>>}[r] & N_{1_Y} \ar@{ >->}[d]^-{\n_{1_Y}} \\
    X \ar[r]_-f & Y}
$$
commute is a regular epimorphism. All morphisms are saturating in the pointed context.
This is also the case for any \emph{quasi-pointed category} \cite{Bou}, namely a finitely complete category with an initial object $0$ and a terminal object $1$ such that the arrow $0 \rightarrow 1$ is a monomorphism. As in the pointed case, it suffices to choose for $\N$ the class of morphisms which factor through the initial object $0$. In this case we shall speak of the  \emph{quasi-pointed context}.
In the total context, any regular epimorphism is saturating.
The proof of the following result is straightforward:

\begin{lemma}\emph{\cite{GraJanRodUrs11}}\label{saturating regular epis}
Let $\CC$ be a regular multi-pointed category with kernels. For a morphism $f:X\rightarrow Y$ the following conditions are equivalent:
\begin{enumerate}
    \item[(a)] $f$ is saturating;
    \item[(b)] $\Delta_Y^\ast=f^\ast f^{\circ}$.
\end{enumerate}
\end{lemma}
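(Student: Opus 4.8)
The plan is to reduce everything to the behaviour of the single induced morphism appearing in the definition of \emph{saturating}, and then to read off the equivalence from a short computation in the calculus of star relations. Write $\n_{1_X}\colon \Nk_{1_X}\rightarrow X$ and $\n_{1_Y}\colon \Nk_{1_Y}\rightarrow Y$ for the $\N$-kernels of the identities. Since $\n_{1_X}\in\N$ and $\N$ is an ideal, the composite $f\cdot\n_{1_X}$ lies in $\N$, so it factors through the universal $\N$-subobject $\n_{1_Y}$; this yields a unique $\bar f\colon \Nk_{1_X}\rightarrow \Nk_{1_Y}$ with $\n_{1_Y}\cdot\bar f=f\cdot\n_{1_X}$. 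This $\bar f$ is exactly the dotted arrow in the definition, so condition (a) says precisely that $\bar f$ is a regular epimorphism.

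Next I would rewrite both sides of (b) as relations. Since $\Delta_Y^\ast=(\n_{1_Y},\n_{1_Y})$, as a relation $\Delta_Y^\ast=\n_{1_Y}\n_{1_Y}^{\circ}$. For the other side, using $\Delta_X^\ast=\n_{1_X}\n_{1_X}^{\circ}$ one gets $f^\ast=f\Delta_X^\ast=(f\cdot\n_{1_X})\,\n_{1_X}^{\circ}$, whence
$$
f^\ast f^{\circ}=(f\cdot\n_{1_X})\,\n_{1_X}^{\circ}f^{\circ}=(f\cdot\n_{1_X})(f\cdot\n_{1_X})^{\circ}=\n_{1_Y}\,\bar f\bar f^{\circ}\,\n_{1_Y}^{\circ},
$$
where the middle step uses $\n_{1_X}^{\circ}f^{\circ}=(f\cdot\n_{1_X})^{\circ}$ and the last one substitutes $f\cdot\n_{1_X}=\n_{1_Y}\cdot\bar f$. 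Thus (b) becomes the single equation $\n_{1_Y}\bar f\bar f^{\circ}\n_{1_Y}^{\circ}=\n_{1_Y}\n_{1_Y}^{\circ}$.

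With this in hand both implications are immediate. For (a)$\Rightarrow$(b), if $\bar f$ is a regular epimorphism then $\bar f\bar f^{\circ}=1$ by Lemma~\ref{pps of ms as relations}(2), and the displayed formula collapses to $f^\ast f^{\circ}=\n_{1_Y}\n_{1_Y}^{\circ}=\Delta_Y^\ast$. For (b)$\Rightarrow$(a) I would cancel the monomorphism $\n_{1_Y}$: composing the equation $\n_{1_Y}\bar f\bar f^{\circ}\n_{1_Y}^{\circ}=\n_{1_Y}\n_{1_Y}^{\circ}$ on the left with $\n_{1_Y}^{\circ}$ and on the right with $\n_{1_Y}$, and using the dual of Lemma~\ref{pps of ms as relations}(2), namely $\n_{1_Y}^{\circ}\n_{1_Y}=1$ for the monomorphism $\n_{1_Y}$, yields $\bar f\bar f^{\circ}=1$, so $\bar f$ is a regular epimorphism and $f$ is saturating.

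I expect the only delicate points to be bookkeeping ones: identifying the star $f^\ast$ correctly as $(f\cdot\n_{1_X})\n_{1_X}^{\circ}$ so that the inner $\N$-kernel cancels against $f^{\circ}$ to reproduce $(f\cdot\n_{1_X})^{\circ}$, and justifying the cancellation of the mono $\n_{1_Y}$ in the converse direction via $\n_{1_Y}^{\circ}\n_{1_Y}=1$. Everything else is a direct application of the relation identities in Lemma~\ref{pps of ms as relations}, which is why the statement is genuinely straightforward.
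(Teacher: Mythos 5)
Your proof is correct: the reduction of condition (b) to $\n_{1_Y}\bar f\bar f^{\circ}\n_{1_Y}^{\circ}=\n_{1_Y}\n_{1_Y}^{\circ}$ is valid, and the two implications follow from Lemma~\ref{pps of ms as relations}(2) together with the standard fact that $m^{\circ}m=1$ for a monomorphism $m$. The paper itself omits the argument (declaring it straightforward and citing \cite{GraJanRodUrs11}), and your computation is exactly the intended one.
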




The next result gives a characterisation of $2$-star-permutable categories which will be useful in the following section.

\begin{proposition}
\label{star-Mal'tsev characterisations}
For a regular multi-pointed category  $\CC$ with kernels and saturating regular epimorphisms the following statements are equivalent:
\begin{enumerate}
    \item[(a)] $\CC$ is a $2$-star-permutable category;
    \item[(b)] for any commutative diagram
\begin{equation}\label{star-cube}
\vcenter{\xymatrix@!0@C=60pt@R=33pt{
    (W\times_D C)^\ast \ar[dd]_-{\nu_1} \ar[dr]^-{\nu_2} \ar@{.>}[rr]^-{\lambda} & & (Y\times_B A)^\ast \ar@{-->}[dd]_(.3){\chi_1} \ar[dr]^-{\chi_2} \\
    & C \ar@<-2pt>[dd]_(.7){g} \ar@{>>}[rr]^(.2){c} & & A \ar@<-2pt>[dd]_-{f} \\
    W \ar[dr]_-{\delta} \ar@{-->>}[rr]^(.7){w} & & Y \ar@{-->}[dr]_(.4){\beta} \\
    & D \ar@<-2pt>[uu]_(.3)t \ar@{>>}[rr]_-d & & B, \ar@<-2pt>[uu]_-s }}
\end{equation}
where the front square is of the form \emph{(\ref{regular po})}, $\beta \cdot w = d \cdot \delta$,
$w$ is a regular epimorphism, $((W\times_D C)^\ast, \nu_1, \nu_2)$ and $((Y\times_B A)^\ast, \chi_1, \chi_2)$ are stars of the corresponding pullback relations, then the comparison morphism $\lambda \colon (W\times_D C)^\ast \rightarrow (Y\times_B A)^\ast$ is also a regular epimorphism.
\end{enumerate}
\end{proposition}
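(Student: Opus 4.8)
The plan is to reduce both implications to the star-regular pushout identity ${cg^{\circ}}^\ast = f^{\circ}d^\ast$ of Definition~\ref{Definition srp}, which by Proposition~\ref{starregularpushoutscharacterisation} is already known to be equivalent to statement (a). The bridge between the two is a relational reading of statement (b). Using the description of the star of a pullback relation from Section~\ref{The star of a pullback relation}, the star $(W\times_D C)^\ast$ of the pullback of $(g,\delta)$ is the relation ${g^{\circ}\delta}^\ast = g^{\circ}\delta^\ast$ from $W$ to $C$, and $(Y\times_B A)^\ast$ is the relation $f^{\circ}\beta^\ast$ from $Y$ to $A$. Since the pair $(\chi_1,\chi_2)$ is jointly monic, the morphism $\lambda$ exhibits the direct image of $(W\times_D C)^\ast$ along the pair $(w,c)$ as a substar of $(Y\times_B A)^\ast$; that this image does land in the target follows from $fc=dg$, $d\delta=\beta w$ and $w\nu_1\in\N$. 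Hence $\lambda$ is a regular epimorphism if and only if this image fills the whole target, that is, if and only if
$$c\,g^{\circ}\delta^\ast\, w^{\circ} = f^{\circ}\beta^\ast .$$

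For the implication (a) $\Rightarrow$ (b) I would start from the right-hand side and consume the hypothesis that $w$ is a \emph{saturating} regular epimorphism. By Lemma~\ref{saturating regular epis} we may rewrite $\Delta_Y^\ast = w^\ast w^{\circ}$, so that $f^{\circ}\beta^\ast = f^{\circ}\beta\Delta_Y^\ast = f^{\circ}\beta w^\ast w^{\circ}$, and the identity $\beta w = d\delta$ upgrades to $\beta w^\ast = d\delta^\ast$, whence $f^{\circ}\beta^\ast = f^{\circ}d\,\delta^\ast\, w^{\circ}$. It then suffices to prove $c\,g^{\circ}\delta^\ast = f^{\circ}d\,\delta^\ast$. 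For this I would exploit the identity $\delta^\ast = {}^\ast\delta^\ast = \Delta_D^\ast\delta^\ast$, valid for any morphism, to factor a $\Delta_D^\ast$ out of $\delta^\ast$ on the left, obtaining $c\,g^{\circ}\delta^\ast = {cg^{\circ}}^\ast\delta^\ast$ and $f^{\circ}d\,\delta^\ast = f^{\circ}d^\ast\delta^\ast$ through the rules of Section~\ref{Calculus of star relations}. The star-regular pushout identity ${cg^{\circ}}^\ast = f^{\circ}d^\ast$ then closes the gap, and right-composing with $w^{\circ}$ yields the required equation, so that $\lambda$ is a regular epimorphism.

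For (b) $\Rightarrow$ (a) I would traverse the equivalence of Proposition~\ref{starregularpushoutscharacterisation} in the opposite sense: it is enough to show that an arbitrary diagram of type~(\ref{regular po}) is a star-regular pushout. To this end I would apply (b) to the star-cube erected over that diagram with the degenerate back face $W=Y=D$, $\delta = w = 1_D$ and $\beta = d$; this is a legitimate instance, since $1_D$ is a regular epimorphism and $\beta w = d = d\delta$. For this choice the pullback of $(f,\beta)$ is exactly $D\times_B A$, so $(Y\times_B A)^\ast = f^{\circ}d^\ast$, while the direct image of $(W\times_D C)^\ast = {g^{\circ}}^\ast$ along $(1_D,c)$ is ${cg^{\circ}}^\ast$. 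The conclusion of (b), that $\lambda$ is a regular epimorphism, thus reads ${cg^{\circ}}^\ast = f^{\circ}d^\ast$, i.e. the diagram is a star-regular pushout, and Proposition~\ref{starregularpushoutscharacterisation} delivers (a).

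I expect the main obstacle to be the faithful relational translation of ``$\lambda$ is a regular epimorphism'' together with the single step where the saturating hypothesis is genuinely used. Converting $\Delta_Y^\ast$ into $w^\ast w^{\circ}$ by Lemma~\ref{saturating regular epis} is precisely what allows the commutativity $\beta w = d\delta$ to be promoted to the starred identity $\beta w^\ast = d\delta^\ast$ and so to bring the whole computation within reach of the star calculus; without saturating regular epimorphisms this rewriting, and hence the implication (a) $\Rightarrow$ (b), breaks down. The remaining manipulations are formal consequences of the calculus of star relations recalled in Section~\ref{Calculus of star relations}.
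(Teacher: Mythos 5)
Your proposal is correct and follows essentially the same route as the paper: both translate ``$\lambda$ is a regular epimorphism'' into the relational identity $cg^{\circ}\delta^{\ast}w^{\circ}=f^{\circ}\beta^{\ast}$, reduce it to the star-regular pushout identity ${cg^{\circ}}^{\ast}=f^{\circ}d^{\ast}$ of Proposition~\ref{starregularpushoutscharacterisation} via Lemma~\ref{saturating regular epis} and the rule $^{\ast}\delta^{\ast}=\delta^{\ast}$, and obtain the converse from the degenerate cube with $W=Y=D$, $\delta=w=1_D$, $\beta=d$. The only cosmetic difference is that you prove the key identity as a single chain of equalities where the paper splits it into two inequalities.
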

\proof
\noindent (a) $\Rightarrow$ (b)
 To prove that the arrow $\lambda$ in the cube above is a regular epimorphism, we must show that $\langle \chi_1, \chi_2 \rangle \lambda$ in the commutative diagram
 $$
\xymatrix@C=20pt{(W\times_D C)^\ast \ar[r]^-{\lambda} \ar@{ >->}[d]_-{\langle \nu_1, \nu_2 \rangle} &
    (Y\times_B A)^\ast \ar@{ >->}[d]^-{\langle \chi_1, \chi_2 \rangle} \\
    W\times C \ar@{>>}[r]_-{w\times c} & Y\times A}
$$
is the (regular epimorphism, monomorphism) factorisation of the morphism $\langle w\cdot \nu_1, c\cdot \nu_2 \rangle \colon (W\times_D C)^\ast \rightarrow Y\times A$. That is, we must have $c\nu_2\nu_1^{\circ}w^{\circ}=\chi_2\chi_1^{\circ}$ or, equivalently, $cg^{\circ}\delta^\ast w^{\circ}=f^{\circ}\beta^\ast$, since $\nu_2\nu_1^{\circ}=\nu^\ast=g^{\circ}\delta^\ast$ and $\chi_2\chi_1^{\circ}=\chi^\ast=f^{\circ}\beta^\ast$ (see Section~\ref{The star of a pullback relation}).

The front square of diagram (\ref{star-cube}) is a star-regular pushout by Proposition \ref{starregularpushoutscharacterisation}, which means that the equality
$$
(B) \qquad c{g^{\circ }}^*= f^{\circ }d^*
$$
holds true. Now,
we always have
$$ \begin{tabular}{lll}
    $cg^{\circ}\delta^\ast w^{\circ}$ & $\leqslant f^{\circ}d \delta^\ast w^{\circ}$ & (commutativity of the front face of (\ref{star-cube})) \\
    & $= f^{\circ}\beta w^\ast w^{\circ}$ & ($d\cdot \delta=\beta\cdot w$) \\
    & $= f^{\circ}\beta \Delta_Y^\ast$ & (Lemma~\ref{saturating regular epis}) \\
    & $= f^{\circ}\beta^\ast$.
\end{tabular}
$$
The other inequality follows from
$$\begin{tabular}{lll}
    $cg^{\circ}\delta^\ast w^{\circ}$ & $\geqslant c{g^{\circ}}^\ast \delta^\ast w^{\circ}$ &  ($g^{\circ} \geqslant {g^{\circ}}^{\ast}$) \\
    & $= f^{\circ}d^\ast \delta^\ast w^{\circ}$ & (B) \\
    & $= f^{\circ}d \delta^\ast w^{\circ}$ & ($^\ast \delta^\ast= \delta^\ast$; Section~\ref{Calculus of star relations}) \\
    & $= f^{\circ}\beta^\ast$. & (as in the inequality above)
\end{tabular}$$

\noindent (b) $\Rightarrow$ (a) A commutative diagram of type (\ref{regular po}) induces a commutative cube
$$
\xymatrix@!0@C=60pt@R=33pt{
    \Nk_g \ar[dd]_-{g\cdot \n_g} \ar[dr]^-{\n_g} \ar@{.>>}[rr]^-{\lambda} & & (D \times_B A)^* \ar@{-->}[dd]_(.3){\chi_1}  \ar[dr]^-{\chi_2} \\
    & C \ar@<-2pt>[dd]_(.7){g} \ar@{>>}[rr]^(.2){c} & & A \ar@<-2pt>[dd]_-{f} \\
    D \ar@{=}[dr] \ar@{=}[rr] & & D  \ar[dr]_(.4){d} \\
    & D \ar@<-2pt>[uu]_(.3)t \ar@{>>}[rr]_-d & & B, \ar@<-2pt>[uu]_-s }
$$
where $\nu=(g\cdot \n_g,\n_g)$ is the star of the pullback (relation) of $(g,1_D)$. By assumption, $\lambda$ is a regular epimorphism which translates into the equality $cg^{\circ}1_D^\ast 1_D={f^{\circ}d}^\ast$, as observed in the first part of the proof. We get the equality ${cg^{\circ}}^\ast=f^{\circ} d^\ast$,  and this proves that diagram (\ref{regular po}) is a star-regular pushout and, consequently, that $\CC$ is a $2$-star-permutable category by Proposition~\ref{starregularpushoutscharacterisation}.

\endproof

In the total context, Proposition~\ref{star-Mal'tsev characterisations} is the ``star version'' of Proposition $3.6$ in \cite{Cuboid} (see also Proposition $4.1$ in \cite{B}). In the pointed context condition (b) of Proposition~\ref{star-Mal'tsev characterisations} also reduces to the pointed version of the right saturation property (in the sense of \cite{GraJanRodUrs11}).  Indeed, in this context that condition says that, in the following commutative diagram
\begin{equation}\label{pointed context cube}
\vcenter{\xymatrix@!0@C=60pt@R=33pt{
    \Ker(g) \ar[dd]_-{0} \ar[dr]^-{\ker(g)} \ar@{.>}[rr]^-{\bar{c}} & & \Ker(f) \ar@{-->}[dd]_(.3){0} \ar[dr]^-{\ker(f)} \\
    & C \ar@<-2pt>[dd]_(.7){g} \ar@{>>}[rr]^(.2){c} & & A \ar@<-2pt>[dd]_-{f} \\
    W \ar[dr]_-{\delta} \ar@{-->>}[rr]^(.7){w} & & Y  \ar@{-->}[dr]_(.4){\beta} \\
    & D \ar@<-2pt>[uu]_(.3)t \ar@{>>}[rr]_-d & & B, \ar@<-2pt>[uu]_-s }}
\end{equation}
the induced arrow $\overline{c} \colon \Ker(g) \rightarrow \Ker(f)$ is a regular epimorphism.

We conclude this section with the pointed version of Propositions~\ref{starregularpushoutscharacterisation} and ~\ref{star-Mal'tsev characterisations}:

\begin{corollary}\emph{(see Theorem $2.12$ in \cite{GraJanRodUrs11})}
\label{subtractive characterisations}
For a pointed regular category $\CC$ the following statements are equivalent:
\begin{enumerate}
    \item[(a)] $\CC$ is a subtractive category;
    \item[(b)] any commutative diagram of the form \emph{(\ref{regular po})} is right saturated, i.e. the comparison morphism $\bar{c}\colon \Ker(g) \rightarrow \Ker(f)$ is a regular epimorphism.
\end{enumerate}
\end{corollary}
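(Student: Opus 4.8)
The plan is to read Corollary \ref{subtractive characterisations} as the pointed specialisation of Propositions \ref{starregularpushoutscharacterisation} and \ref{star-Mal'tsev characterisations}, so that the proof reduces to translating the two notions appearing there into their pointed incarnations. First I would fix the setting: a pointed regular category $\CC$, equipped with the ideal $\N$ of all zero morphisms, is a regular multi-pointed category with kernels (kernels exist because $\CC$ is finitely complete and pointed), and in this context the $\N$-kernels and star-kernels are exactly the ordinary kernels. Since moreover every morphism is saturating in the pointed context, the standing hypotheses of Proposition \ref{star-Mal'tsev characterisations} hold automatically and Proposition \ref{starregularpushoutscharacterisation} applies without restriction.

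Next I would invoke the two identifications. On one side, in the pointed context the property of $2$-star-permutability coincides with subtractivity in the sense of \cite{ZJ1}: this is the content of Theorem 6.9 of \cite{ZJan06}, and it yields the equivalence of statement (a) with ``$\CC$ is $2$-star-permutable''. On the other side, I would translate the star-regular pushout condition of Definition \ref{Definition srp} into the right saturation property of statement (b). For this I would revisit diagram (\ref{star-regular po explaination}) under the pointed interpretation of the $\N$-kernels: the pullback projection $x$ satisfies $\Nk_x \cong \Ker(f)$ (kernels being stable under pullback), while a direct inspection identifies $\Nk_a$ with the image of the induced comparison $\bar c \colon \Ker(g) \to \Ker(f)$, the dotted arrow $\Nk_a \to \Nk_x$ being precisely the monomorphism part of the (regular epimorphism, monomorphism) factorisation of $\bar c$. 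Hence the dotted arrow is an isomorphism --- that is, diagram (\ref{regular po}) is a star-regular pushout --- if and only if $\bar c$ is a regular epimorphism, which is exactly right saturation.

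Combining these, Proposition \ref{starregularpushoutscharacterisation} gives that $\CC$ is $2$-star-permutable if and only if every diagram of type (\ref{regular po}) is a star-regular pushout, and the two translations above turn this into the desired equivalence (a) $\Leftrightarrow$ (b). One could equally run the argument through Proposition \ref{star-Mal'tsev characterisations}, whose pointed condition (b) collapses, via diagram (\ref{pointed context cube}), to the same right saturation statement.

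The step I expect to be the most delicate is the second translation: one must check carefully in diagram (\ref{star-regular po explaination}) that, under the pointed reading, the comparison $\Nk_a \to \Nk_x$ really is the image-factorisation inclusion of $\bar c$, so that ``isomorphism'' matches ``regular epimorphism''. This uses that $\Ker(x) \cong \Ker(f)$ and that $p \colon C \twoheadrightarrow M$ restricts to a regular epimorphism onto $\Nk_a \cong \mathrm{im}(\bar c)$; both facts are routine but need to be made explicit, since they are exactly what underlies the pointed dictionary ``star-regular pushout $=$ right saturation''. The remaining equivalence with subtractivity is not reproved here but imported from \cite{ZJan06}.
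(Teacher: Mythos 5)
Your proposal is correct and follows essentially the same route the paper intends for this corollary: specialise Proposition \ref{starregularpushoutscharacterisation} to the pointed context, import the identification of $2$-star-permutability with subtractivity from Theorem 6.9 of \cite{ZJan06}, and read off from diagram (\ref{star-regular po explaination}) that a star-regular pushout is the same as a right-saturated square. Your careful identification of $\Nk_a$ with the image of $\bar c$ (so that the dotted arrow is the mono part of its factorisation) is in fact slightly more precise than the paper's own passing remark.
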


\section{The Star-Cuboid Lemma}
\label{The (Star-)Cuboid Lemma}
In \cite{Cuboid} it was shown that regular Mal'tsev categories can be characterised through the validity of a homological lemma called the Upper Cuboid Lemma, a strong form of the denormalised $3 \times 3$ Lemma \cite{B,L,GR}. We are now going to extend this result to the star-regular context. We shall then observe that, in the pointed context, it gives back the classical Upper $3 \times 3$ Lemma characterising subtractive normal categories.

\subsection{$\N$-trivial objects}
An object $X$ in a multi-pointed category is said to be \textit{$\N$-trivial} when $1_X\in \N$. If a composite $f\cdot g$ belongs to $\N$ and $g$ is a strong epimorphism, then also $f$ belongs to $\N$. This implies that $\N$-trivial objects are closed under strong quotients. One says that a multi-pointed category
 $\mathbb C$ \emph{has enough trivial objects} \cite{Star3x3} when $\N$ is a closed ideal \cite{Grandis92}, i.e.~any morphism in $\N$ factors through an $\N$-trivial object and, moreover, the class of $\N$-trivial objects is closed under subobjects and squares, where the latter property means that, for any $\N$-trivial object $X$, the object $X^2=X\times X$ is $\N$-trivial.
An equivalent way of expressing the existence of enough trivial objects is recalled in the following:
\begin{proposition}\emph{\cite{Star3x3}}\label{Prop: enough trivial onjs}
Let $\mathbb{C}$ be a regular multi-pointed category with kernels. The following conditions are equivalent:
\begin{enumerate}
    \item[(a)] if $(\sigma_1,\sigma_2): S\rightrightarrows X$ is a relation on $X$ such that $\sigma_1\cdot n \in \mathcal{N} $ and $\sigma_2\cdot n \in \mathcal{N}$, then $n\in\mathcal{N}$;
    \item[(b)] $\mathbb{C}$ has enough trivial objects.
\end{enumerate}
\end{proposition}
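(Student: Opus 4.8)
The plan is to prove the two implications separately; in both, the engine is the (regular epimorphism, monomorphism) factorisation of $\mathbb{C}$ together with the stated fact that if $f\cdot g\in\N$ with $g$ a strong epimorphism then $f\in\N$ (equivalently, that $\N$-trivial objects are closed under strong quotients).

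For (a)$\Rightarrow$(b) I would feed three carefully chosen relations into condition (a). To obtain closure under subobjects, given a monomorphism $m\colon A'\rightarrowtail A$ with $A$ trivial, I note that $m=1_A\cdot m\in\N$ and apply (a) to the relation $(m,m)\colon A'\rightrightarrows A$ (jointly monic, since $\langle m,m\rangle=\Delta_A\cdot m$) with $n=1_{A'}$: as $\sigma_1 n=\sigma_2 n=m\in\N$, condition (a) yields $1_{A'}\in\N$, i.e.\ $A'$ is trivial. For closure under squares, I apply (a) to $(p_1,p_2)\colon A\times A\rightrightarrows A$ (jointly monic, since $\langle p_1,p_2\rangle=1_{A\times A}$) with $n=1_{A\times A}$; here $p_i=1_A\cdot p_i\in\N$, so (a) gives $1_{A\times A}\in\N$. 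Finally, for the closed-ideal property I take $f\in\N$, factor it as $f=m\cdot e$ with $e$ a regular epimorphism and $m$ a monomorphism onto the image $I$; since $m\cdot e=f\in\N$ with $e$ a strong epimorphism we get $m\in\N$, and then (a) applied to $(m,m)$ and $n=1_I$ shows $I$ is trivial, so that $f$ factors through a trivial object.

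For (b)$\Rightarrow$(a) I would first reduce to a special case. Given $(\sigma_1,\sigma_2)\colon S\rightrightarrows X$ and $n\colon Z\to S$ with $\sigma_1 n,\sigma_2 n\in\N$, I factor $n=m_N\cdot e_N$ through its image $N\rightarrowtail S$. Setting $\nu_i=\sigma_i\cdot m_N$, I have $\nu_i\cdot e_N=\sigma_i n\in\N$ with $e_N$ a strong epimorphism, whence $\nu_i\in\N$; moreover $\langle\nu_1,\nu_2\rangle=\langle\sigma_1,\sigma_2\rangle\cdot m_N$ is a monomorphism. It therefore suffices to show that a relation $(\nu_1,\nu_2)\colon N\rightrightarrows X$ both of whose legs lie in $\N$ has a trivial domain; indeed $N$ trivial then makes $n=m_N\cdot e_N$ factor through a trivial object, so $n\in\N$. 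To handle this case I factor each $\nu_i$ through its image $A_i\rightarrowtail X$; by the closed-ideal property each $\nu_i$ passes through a trivial object, so $A_i$, being a strong quotient of a subobject of a trivial object, is itself trivial. Because $\langle\nu_1,\nu_2\rangle=(\iota_1\times\iota_2)\cdot\langle\bar\nu_1,\bar\nu_2\rangle$ is monic and $\iota_1\times\iota_2$ is monic, the comparison $\langle\bar\nu_1,\bar\nu_2\rangle\colon N\rightarrowtail A_1\times A_2$ is a monomorphism, exhibiting $N$ as a subobject of a product of two trivial objects.

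The main obstacle is this final step: to conclude that $N$ is trivial I must know that the product $A_1\times A_2$ of two trivial objects is again trivial, after which closure under subobjects finishes the proof. This product-closure of the class of trivial objects is the key property that must be extracted from the hypothesis of having enough trivial objects, and it is where the real work of the direction (b)$\Rightarrow$(a) lies. I would note, as a sanity check on the logic, that this product-closure is itself exactly the instance of (a) coming from the relation $(\iota_1 p_1,\iota_2 p_2)\colon A_1\times A_2\rightrightarrows X$ with $n=1_{A_1\times A_2}$ (both legs lie in $\N$ since the $A_i$ are trivial), which explains why it cannot be sidestepped in recovering (a) from the closure conditions.
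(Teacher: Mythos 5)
Your direction (a)$\Rightarrow$(b) is correct: the three instances of condition (a) you choose (the relation $(m,m)$ on a trivial object for subobject-closure, the pair of projections of $A\times A$ for square-closure, and $(m,m)$ on the image of a morphism of $\N$ for the closed-ideal property) are exactly the right ones, and the cancellation of strong epimorphisms that you invoke is available since the paper asserts it for multi-pointed categories with kernels. (The paper itself gives no proof of this proposition, citing \cite{Star3x3}, so there is nothing to compare against directly.) The gap you flag in (b)$\Rightarrow$(a) is, however, genuine and cannot be closed along the route you chose: ``enough trivial objects'' only provides closure of the trivial objects under \emph{squares} $T\mapsto T\times T$, and the paper is explicit that closure under \emph{binary products} of two possibly distinct trivial objects is a strictly additional hypothesis --- it is precisely what separates condition (a) of this proposition (relations on a single object $X$) from condition (a') stated just after it (relations from $X$ to $Y$). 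So you should not expect to prove that $A_1\times A_2$ is trivial for your two distinct images $A_1, A_2$, and your ``sanity check'' only shows that product-closure is an instance of (a), which is circular for the purpose of deriving (a) from (b).

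The missing idea is to replace the two images $A_1,A_2$ by a \emph{single} trivial subobject of $X$ through which both legs factor, namely the $\N$-kernel $\n_{1_X}\colon N_{1_X}\rightarrowtail X$ of the identity of $X$. Since $\nu_i\in\N$ means $1_X\cdot \nu_i\in\N$, the universal property of $\n_{1_X}$ gives factorisations $\nu_i=\n_{1_X}\cdot\bar{\nu}_i$. The object $N_{1_X}$ is trivial: $\n_{1_X}\in\N$ factors through a trivial object by closedness of the ideal, and being a monomorphism it exhibits $N_{1_X}$ as a subobject of that trivial object. Now $\langle \nu_1,\nu_2\rangle=(\n_{1_X}\times \n_{1_X})\cdot\langle \bar{\nu}_1,\bar{\nu}_2\rangle$ is monic, hence so is $\langle \bar{\nu}_1,\bar{\nu}_2\rangle\colon N\rightarrowtail N_{1_X}\times N_{1_X}$, and the codomain is the \emph{square} of a single trivial object, hence trivial; closure under subobjects then makes $N$ trivial and $n=m_N\cdot e_N\in\N$. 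This drops straight into your own reduction (the passage to the image $N$ of $n$ and the cancellation $\nu_i\cdot e_N\in\N\Rightarrow\nu_i\in\N$ are fine as you wrote them). It also explains why (a) is formulated for relations on one object: for a relation from $X$ to $Y$ the two legs would factor through $N_{1_X}$ and $N_{1_Y}$ respectively, and there the binary-product assumption of condition (a') really is needed.
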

In the following we shall also assume that $\N$-trivial objects are closed under binary products. Remark that in the total and in the (quasi-)pointed contexts there are enough trivial objects, and $\N$-trivial objects are closed under binary products.

Under the presence of enough trivial objects the assumption that  $\N$-trivial objects are closed under binary products is equivalent to the following condition:
\begin{enumerate}
\item[(a')]  if $(\sigma_1,\sigma_2): S\rightarrowtail X \times Y$ is a relation from $X$ to $Y$ such that $\sigma_1\cdot n \in \mathcal{N} $ and $\sigma_2\cdot n \in \mathcal{N}$, then $n\in\mathcal{N}$.
\end{enumerate}
Whenever the category has enough trivial objects, condition (a') implies that star-kernels ``commute'' with stars of pullback relations:

\begin{lemma}\label{starlimits}
Let $\mathbb C$ be a multi-pointed category with kernels, enough trivial objects, and assume that $\N$-trivial objects are closed under binary products. Given a commutative cube
$$
\vcenter{
\xymatrix@!0@C=60pt@R=33pt{
    (W \times_D C)^\ast \ar[dd]_-{\nu_1} \ar[dr]^-{\nu_2} \ar[rr]^-{\lambda} & & (Y\times_B A)^\ast \ar@{-->}[dd]_(.3){\chi_1} \ar[dr]^-{\chi_2} \\
    & C \ar[dd]_(.7){g} \ar@{>>}[rr]^(.2){c} & & A \ar[dd]_-{f} \\
    W \ar[dr]_-{\delta} \ar@{-->>}[rr]^(.7){w} & & Y \ar@{-->}[dr]_(.4){\beta} \\
    & D  \ar@{>>}[rr]_-d & & B}}
$$
in $\mathbb C$, consider the star-kernels of $c$, $d$ and $w$, and the induced morphisms $\overline{\delta} \colon \Eq(w)^* \rightarrow \Eq(d)^*$ and
$\overline{g} \colon \Eq(c)^* \rightarrow \Eq(d)^*$. Then the following constructions are equivalent (up to isomorphism):

\begin{itemize}
\item taking the horizontal star-kernel of $\lambda$ and then the induced morphisms $\Eq(\lambda)^* \rightarrow \Eq(w)^*$ and $\Eq(\lambda)^* \rightarrow \Eq(c)^*$;
\item taking the star of the pullback (relation) of $\overline{g}$ and $\overline{\delta}$ and then the induced morphisms $(\Eq(w)^\ast\times_{\Eq(d)^*} \Eq(c)^\ast)^\ast \rightrightarrows (W \times_D C)^*$. \end{itemize}
\end{lemma}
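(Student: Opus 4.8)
The plan is to read both constructions as two instances of the same recipe — first form certain finite limits (kernel pairs and pullbacks), then extract the largest substar by means of the relevant $\N$-kernel — and to prove that the two recipes agree by separating the ``limit part'' from the ``star part''.

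First I would treat the purely finite-limit skeleton obtained by discarding every star operation. Writing $\lambda_0\colon W\times_D C \to Y\times_B A$ for the comparison morphism between the two unstarred pullbacks induced by the cube, discarding the stars turns the two constructions into the kernel pair $\Eq(\lambda_0)$ and into the pullback $\Eq(w)\times_{\Eq(d)}\Eq(c)$ of the induced morphisms $\Eq(c)\to\Eq(d)\leftarrow\Eq(w)$. These are canonically isomorphic, compatibly with all the projections to $W$ and $C$, because both compute the limit of one and the same finite diagram extracted from the cube, merely bracketed in two different orders; this is the general principle that limits commute with limits, and it requires only finite completeness of $\CC$.

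Next I would account for the star operations. Both sides apply the coreflection $\varrho\mapsto\varrho^{\ast}$ (``largest substar'', obtained via an $\N$-kernel, as recalled in Sections~\ref{Calculus of star relations} and~\ref{The star of a pullback relation}), but at different places: on the first side the inner stars sit on the pullback relations $(W\times_D C)^{\ast}$, $(Y\times_B A)^{\ast}$ and the outer star is the star-kernel of $\lambda$; on the second side the inner stars are the star-kernels $\Eq(c)^{\ast},\Eq(w)^{\ast},\Eq(d)^{\ast}$ and the outer star is the star of a pullback relation. Using the identities of the calculus of star relations — in particular $(\sigma\varrho)^{\ast}=\sigma\varrho^{\ast}$, ${}^{\ast}(\varrho^{\ast})=({}^{\ast}\varrho)^{\ast}$ and $(\sigma^{\ast})\varrho=\sigma({}^{\ast}\varrho)$ — I would reorganise the nested stars on each side so that, in both cases, a single outermost star is applied to the common unstarred double-limit of the previous step. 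This exhibits both $\Eq(\lambda)^{\ast}$ and $(\Eq(w)^{\ast}\times_{\Eq(d)^{\ast}}\Eq(c)^{\ast})^{\ast}$ as the largest substar of one and the same relation, presented through two a priori different projections.

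The main obstacle is exactly this last point: the largest substar is extracted by taking the $\N$-kernel of a \emph{chosen} projection, and the two presentations single out different projections of the common double-limit, so it is not formal that the resulting substars coincide. This is where the hypotheses enter decisively. Since $\CC$ has enough trivial objects with $\N$-trivial objects closed under binary products, condition~(a$'$) holds: for a jointly monic pair $(\sigma_1,\sigma_2)$ and a morphism $n$, one has $n\in\N$ as soon as $\sigma_1\cdot n\in\N$ and $\sigma_2\cdot n\in\N$. Applied to the jointly monic pairs underlying the two double-limits, (a$'$) shows that membership in $\N$ is detected jointly by the relevant components, so that the $\N$-kernel governing the outer star is independent of which of the (jointly monic) projections is used to compute it. Consequently the two largest substars agree, and the canonical comparison of the first step restricts to an isomorphism between $\Eq(\lambda)^{\ast}$ and $(\Eq(w)^{\ast}\times_{\Eq(d)^{\ast}}\Eq(c)^{\ast})^{\ast}$, automatically compatible with the induced morphisms to $\Eq(w)^{\ast}$ and $\Eq(c)^{\ast}$ (equivalently, with the two morphisms to $(W\times_D C)^{\ast}$). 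I expect verifying that (a$'$) indeed matches the two $\N$-kernels, rather than the limit bookkeeping of the first step, to be the technical heart of the argument.
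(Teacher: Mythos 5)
Your proposal follows essentially the same route as the paper, whose proof is the one-line observation that the result ``follows easily by the usual commutation of kernel pairs with pullbacks and condition (a$'$)'': you correctly isolate the two ingredients, namely the interchange of the unstarred finite limits and the use of condition (a$'$) to reconcile the $\N$-kernels defining the two outer stars. Your expanded account of where (a$'$) is actually needed is a faithful elaboration of the argument the paper leaves implicit.
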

\proof
This follows easily by the usual commutation of kernel pairs with pullbacks and condition (a').
\endproof

\label{suf conds}

In a star-regular category, a \textit{(short) star-exact sequence} is a diagram
$$
\xymatrix{ \Eq(f)^\ast \ar@<3pt>[r]^-{f_1} \ar@<-3pt>[r]_-{f_2} & X\ar@{>>}[r]^-{f} & Y}
$$
where $\Eq(f)^*$ is a star-kernel of $f$ and $f$ is a coequaliser of ${f_1}$ and ${f_2}$ (which, by star-regularity, is the same as to say that $f$ is a regular epimorphism). In the total context, a star-exact sequence is just an exact fork, while in the (quasi-)pointed context it is a short exact sequence in the usual sense.
\vspace{3mm}

\noindent\textbf{The Star-Upper Cuboid Lemma} \\
Let $\CC$ be a star-regular category. Consider a commutative diagram of morphisms and stars in $\mathbb C$
\begin{equation}
\label{star-regular 4x3}
\vcenter{\xymatrix@!0@C=24pt@R=40pt{
    & P \ar@{>>}[dl]_(.6){\tau_1} \sarl{\pi}{rrrrr} \ar[ddr]^(.75){\tau_2} & & & & &
    (W\times_D C)^\ast \ar@{-->>}[dl]_(.6){\nu_1} \ar[rrrrr]^-{\lambda} \ar[ddr]^(.75){\nu_2} & & & & &
    (Y\times_B A)^\ast \ar@{-->>}[dl]_(.6){\chi_1} \ar[ddr]^(.75){\chi_2} \\
    \Eq(w)^\ast \sarldash{}{rrrrr} \ar[ddr]_-(.2){\bar{\delta}} & & & & &
    W \ar@{-->>}[rrrrr]^(.73){w} \ar@{-->}[ddr]_-(.2){\delta} & & & & &
    Y \ar@{-->}[ddr]_-(.2){\beta} \\
    & & \Eq(c)^\ast \ar@{>>}[dl]_(.6){\bar{g}} \sarl{}{rrrrr} & & & & &
    C \ar@{>>}[dl]_(.6){g} \ar@{>>}[rrrrr]_(.27){c} & & & & & A \ar@{>>}[dl]_(.6){f}  \\
    & S \sarr{\sigma}{rrrrr} & & & & &
    D \ar[rrrrr]_-{d} & & & & & B, }}
\end{equation}
where the three diamonds are stars of pullback (relations) of regular epimorphisms along arbitrary morphisms (so that $P=(\Eq(w)^\ast\times_S \Eq(c)^\ast)^\ast$) and the two middle rows are star-exact sequences. Then \emph{the upper row is a star-exact sequence whenever the lower row is}.

Note that, in the diagram \eqref{star-regular 4x3} above, $d$ is necessarily a regular epimorphism, $d\cdot \sigma_1=d\cdot \sigma_2$ since $\bar{g}$ is an epimorphism, and $\lambda\cdot \pi_1=\lambda\cdot \pi_2$, because the pair of morphisms $(\chi_1,\chi_2)$ is jointly monomorphic.

\begin{theorem}\label{thm star-upper cuboid}
Let $\CC$ be a star-regular category with saturating regular epimorphisms, enough trivial objects, and assume that $\N$-trivial objects are closed under binary products. The following conditions are equivalent:
\begin{enumerate}
\item[(a)] $\CC$ is a $2$-star-permutable category;
\item[(b)] the Star-Upper Cuboid Lemma holds true in $\CC$.
\end{enumerate}
\end{theorem}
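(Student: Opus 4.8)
The plan is to prove the two implications separately, leaning on Propositions~\ref{starregularpushoutscharacterisation} and~\ref{star-Mal'tsev characterisations} and on the commutation of star-kernels with stars of pullback relations recorded in Lemma~\ref{starlimits}. For (b)~$\Rightarrow$~(a) I would show that the Star-Upper Cuboid Lemma contains condition (b) of Proposition~\ref{star-Mal'tsev characterisations} as a special case. Given an arbitrary cube~(\ref{star-cube}) whose front square is of type~(\ref{regular po}) and with $w$ a regular epimorphism, I extend it downwards to a diagram of the form~(\ref{star-regular 4x3}) by adjoining the star-exact sequence $\Eq(d)^\ast \rightrightarrows D \to B$ as lower row (so $S=\Eq(d)^\ast$; this is star-exact because $d$ is a regular epimorphism), and by taking the three diamonds to be the stars of the pullback relations of $g$, of $f$, and of the induced map $\bar g\colon \Eq(c)^\ast \to \Eq(d)^\ast$. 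Since the front square is of type~(\ref{regular po}), the equality $g\langle \Eq(c)^\ast\rangle=\Eq(d)^\ast$ noted before Proposition~\ref{starregularpushoutscharacterisation} makes $\bar g$ a regular epimorphism, so all hypotheses of the lemma are met; the lower row being star-exact then forces the upper row to be star-exact, whence $\lambda$ is a regular epimorphism. This is exactly condition (b) of Proposition~\ref{star-Mal'tsev characterisations}, and $2$-star-permutability follows.

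For (a)~$\Rightarrow$~(b), assume $\CC$ is $2$-star-permutable and that the lower row of~(\ref{star-regular 4x3}) is star-exact, so $S=\Eq(d)^\ast$. Star-exactness of the upper row amounts to two facts. First, that $P$ is the star-kernel of $\lambda$: this is immediate from Lemma~\ref{starlimits}, which identifies $\Eq(\lambda)^\ast$ with $(\Eq(w)^\ast\times_{\Eq(d)^\ast}\Eq(c)^\ast)^\ast=P$. Second, that $\lambda$ is a regular epimorphism; granting this, star-regularity makes $\lambda$ the coequaliser of the star $P\rightrightarrows (W\times_D C)^\ast$, so the upper row is a star-exact sequence. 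Exactly as in the proof of Proposition~\ref{star-Mal'tsev characterisations}, $\lambda$ being a regular epimorphism is equivalent to the relational identity $c g^\circ \delta^\ast w^\circ = f^\circ\beta^\ast$; the inequality $c g^\circ \delta^\ast w^\circ \le f^\circ\beta^\ast$ holds in general (from $f\cdot c=d\cdot g$, $d\cdot\delta=\beta\cdot w$ and the fact that $w$ is saturating, via Lemma~\ref{saturating regular epis}), and the reverse inequality follows once we know that the front square is a star-regular pushout, i.e.\ that ${cg^\circ}^\ast=f^\circ d^\ast$.

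Establishing ${cg^\circ}^\ast=f^\circ d^\ast$ for the front square is the step I expect to be the main obstacle, since here $g$ and $f$ are merely regular epimorphisms, so Proposition~\ref{starregularpushoutscharacterisation} does not apply verbatim. The idea is to feed the extra datum that $\bar g$ is a regular epimorphism, namely $g\langle\Eq(c)^\ast\rangle=\Eq(d)^\ast$, equivalently $g\,c^\circ c^\ast\,g^\circ=\Eq(d)^\ast$, into the relational calculus. From $f\cdot c=d\cdot g$ and $g\cdot g^\circ=1_D$ one gets $f^\circ d=\Eq(f)\,cg^\circ$, hence $f^\circ d^\ast=\Eq(f)\,{cg^\circ}^\ast$; and from $\Eq(fc)=\Eq(dg)$ together with $c\cdot c^\circ=1_A$ one gets $\Eq(f)\,c=cg^\circ\,\Eq(d)\,g$. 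Since $g\cdot{g^\circ}^\ast=\Delta_D^\ast$, these combine to $f^\circ d^\ast=cg^\circ\,\Eq(d)^\ast$. Substituting $\Eq(d)^\ast=g\,c^\circ c^\ast\,g^\circ$ and recognising $g^\circ g=\Eq(g)$ and $c^\circ c^\ast=\Eq(c)^\ast$ rewrites the right-hand side as $c\,\Eq(g)\Eq(c)^\ast\,g^\circ$; now $2$-star-permutability replaces $\Eq(g)\Eq(c)^\ast$ by $\Eq(c)\Eq(g)^\ast$, and cancelling $c\cdot c^\circ=1_A$ leaves $cg^\circ g^\ast g^\circ$, which equals ${cg^\circ}^\ast$ because $g$ is saturating (so $g^\ast g^\circ=\Delta_D^\ast$, Lemma~\ref{saturating regular epis}). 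Together with the always-valid inclusion ${cg^\circ}^\ast\le f^\circ d^\ast$, this yields the desired equality, completes the verification that $\lambda$ is a regular epimorphism, and therefore shows that the upper row of~(\ref{star-regular 4x3}) is a star-exact sequence.
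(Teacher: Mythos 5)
Your proof is correct and follows essentially the same route as the paper: the (b)$\Rightarrow$(a) direction is identical, and for (a)$\Rightarrow$(b) you use the same ingredients (Lemma~\ref{starlimits} to identify $P$ with $\Eq(\lambda)^\ast$, the reduction to $\lambda$ being a regular epimorphism via $cg^{\circ}\delta^\ast w^{\circ}=f^{\circ}\beta^\ast$, the identity $g\langle \Eq(c)^\ast\rangle=\Eq(d)^\ast$ coming from star-exactness of the lower row, $2$-star-permutability, and saturation). The only difference is organisational: you factor the computation through the intermediate claim that the front square satisfies ${cg^{\circ}}^\ast=f^{\circ}d^\ast$ even though its columns are not split, whereas the paper establishes the single chain of (in)equalities $cg^{\circ}\delta^\ast w^{\circ}\geqslant f^{\circ}\beta^\ast$ directly, using exactly the same relational identities.
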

\proof
\noindent (a) $\Rightarrow$ (b)
 Suppose that the lower row is a star-exact sequence. The fact that $\pi=\Eq({\lambda})^\ast$ follows from Lemma \ref{starlimits}.
As explained in Proposition~\ref{star-Mal'tsev characterisations}, $\lambda$ is a regular epimorphism if and only if $c g^{\circ}\delta^\ast w^{\circ}\geqslant f^{\circ}\beta^\ast$. In fact we have
$$\begin{tabular}{lll}
    $cg^{\circ}\delta^\ast w^{\circ}$ & $=cc^{\circ}cg^{\circ}gg^{\circ}\delta^\ast w^{\circ}$ &  (Lemma~\ref{pps of ms as relations}(1)) \\
    & $\geqslant cc^{\circ}cg^{\circ}g^\ast g^{\circ}\delta^\ast w^{\circ}$ &  ($\Eq(g) \geqslant \Eq(g)^\ast$) \\
    & $= cg^{\circ}gc^{\circ}c^\ast g^{\circ}\delta^\ast w^{\circ}$ & ($\Eq(c)\Eq(g)^\ast=\Eq(g)\Eq(c)^\ast$; Definition~\ref{2-star-permutable}) \\
    & $= cg^{\circ}d^{\circ}d^\ast\delta^\ast w^{\circ}$ & ($g\langle \Eq(c)^\ast \rangle=\Eq(d)^\ast$ by assumption) \\
    & $= cg^{\circ}d^{\circ}d\delta^\ast w^{\circ}$ & ($^\ast \delta^\ast= \delta^\ast$; Section~\ref{Calculus of star relations}) \\
    & $= cc^{\circ}f^{\circ}\beta w^\ast w^{\circ}$ & ($d\cdot g=f\cdot c$, $d\cdot \delta=\beta\cdot w$) \\
    & $= f^{\circ}\beta w^\ast w^{\circ}$ & (Lemma~\ref{pps of ms as relations}(2)) \\
    & $= f^{\circ}\beta \Delta_Y^\ast$ & (Lemma~\ref{saturating regular epis}) \\
    & $= f^{\circ}\beta^\ast$. & (Section~\ref{Calculus of star relations})
\end{tabular}$$

\noindent (b) $\Rightarrow$ (a) Consider a commutative cube of the form (\ref{star-cube}). We construct a commutative diagram of type (\ref{star-regular 4x3}) by taking the star-kernels of $c$, $w$, $d$ and $\lambda$, so that $\bar{g}$, $\bar{\delta}$, $\tau_1$ and $\tau_2$ are the induced arrows between the star-kernels. By Lemma \ref{starlimits} we know that $(\tau_1,\tau_2)$ is the star above the pullback (relation) of $(\bar{g}, \bar{\delta})$. By applying the Star-Upper Cuboid Lemma to this diagram we conclude that the upper row is a star-exact sequence and, consequently, $\lambda$ is a regular epimorphism. By Proposition~\ref{star-Mal'tsev characterisations}, $\CC$ is a $2$-star-permutable category.
\endproof

In the total context, Theorem~\ref{thm star-upper cuboid} is precisely Theorem $4.3$ in \cite{Cuboid}, which gives a characterisation of regular Mal'tsev categories through the Upper Cuboid Lemma, as expected. In the pointed context, the Star-Upper Cuboid Lemma gives the classical Upper $3 \times 3$ Lemma: in the pointed version of diagram (\ref{star-regular 4x3}), the back part is irrelevant (like in diagram (\ref{pointed context cube})). Then the front part is a $3\times 3$ diagram where all columns and the middle row are short exact sequences. The Star-Upper Cuboid Lemma claims that the upper row is a short exact sequence whenever the lower row is, i.e. the same as the Upper $3\times 3$ Lemma. The pointed version of Theorem~\ref{thm star-upper cuboid} is Theorem 5.4 of \cite{ZJan10} which characterises normal subtractive categories.
Note that in the pointed context, the Upper $3 \times 3$ Lemma is also equivalent to the Lower $3 \times 3$ Lemma as shown in \cite{ZJan10}.

\end{document}